  \newcommand\mytitle{The essential norm of multiplication operators on $L_p(\mu)$} 
\newcommand\lhead{J. Voigt}
\newcommand\rhead{Essential norm of multiplication operators}
\numberwithin{equation}{section}
\newtheorem{theorem}{Theorem}[section]
\newtheorem{lemma}[theorem]{Lemma}
\theoremstyle{definition}
\newtheorem{remark}[theorem]{Remark}
 \mathchardef\ordinarycolon\mathcode`\:
\def\bigscpr(#1,#2){{\left(#1\nonscript \mskip2mu plus2mu \middle| \nonscript 
\mskip2mu
plus2mu#2\right)}}
\newcommand\indic{\operatorname{\bf 1}\nolimits}
\renewcommand\phi{\varphi}
\newcommand\eps{\varepsilon}
\renewcommand\epsilon{\varepsilon}
\newcommand{\N}{\mathbb{N}\nonscript\hskip.03em}
\newcommand{\K}{\mathbb{K}\nonscript\hskip.03em}
\newcommand\cA{\mathcal A}
\newcommand\cL{\mathcal L}
\newcommand\cK{\mathcal K}
\newcommand\cZ{\mathcal Z}
\newcommand\rd{\mathrm d}
\newcommand\Lpm{L_p(\mu)}
\newcommand\rr{\mathrm{r}}
\newcommand\cP{\mathcal P}
\newcommand\norm[1]{\|#1\|}
\newcommand\e{{\rm e}}
\def\formE(#1,#2){\sum_{e\in E}\int_{a_e}^{b_e} #1_e'(x)\ol{#2_e'(x)}\,dx}
\let\qedhere@ams\qedhere
\def\qedhere{\@ifnextchar[{\@qedhere}{\qedhere@ams}}
\def\@qedhere[#1]{\tag*{\raisebox{-#1ex}{\qedhere@ams}}}
\def\env@cases{%
  \let\@ifnextchar\new@ifnextchar
  \left\lbrace
  \def\arraystretch{1.1}%
  \array{@{\,}l@{\quad}l@{}}%
}
\renewcommand\section{\@startsection {section}{1}{\z@}%
                                     {-3.25ex \@plus -1ex \@minus -.2ex}%
                                     {1.5ex \@plus.2ex}%
                                     {\normalfont\large\bfseries}}
\newcommand\restrict{\vphantom f\mskip1mu\vrule\mskip2mu}
\newcommand\set[2]{\bigl\{#1{;}\penalty300\;#2\bigr\}}
\newcommand\ol{\overline}
\newcommand\comp{{\textnormal c}}
\newcommand\Cci{{\displaystyle 
C_{\raise0.2ex\hbox{$\scriptstyle\comp$}}^\infty}}
\renewcommand\le{\leqslant}
\renewcommand\ge{\geqslant}
\newcommand\sse{\subseteq}
\newcommand\di{\mathclose{}\,\mathrm{d}}
\newcommand\slim{\mathop{\rm s\kern.08em\mbox{\rm -}lim}} 
\newcommand\abstracttext{\noindent
We show that the formula for the essential norm of a multiplication operator on $L_p$, for $1<p<\infty$, also holds for $p=1$. We also provide a proof for the formula which works simultaneously for all $p\in[1,\infty)$.
\vspace{8pt}

\noindent
MSC 2010: 47B38, 46E30, 46B42.
\vspace{2pt}

\noindent
Keywords: Multiplication operator, $L_p$-space, compact operator, essential norm.
}
\begin{document}
\title{\mytitle}

\author{J\"urgen Voigt}

\date{}

\maketitle

\begin{abstract}
\abstracttext
\end{abstract}

\section{Introduction}
\label{sec-intro}

Let $(\Omega,\cA,\mu)$ be a $\sigma$-finite measure space, and let $1\le p<\infty$. For $u\in L_\infty(\mu)$ let $M_u$ be the bounded multiplication operator on $L_p(\mu)$ defined by
\[
 M_uf:=uf\qquad(f\in L_p(\mu)).
\]
Compactness properties of multiplication operators in various function spaces have been investigated in several papers; see \cite{Takagi-92}, \cite{Hudzik-K-K-06}, \cite{Bala-G-B-13}, \cite{Komal-P-R-16}, \cite{Raj-S-P-16}, \cite{Castillo-RF-SB-16}, \cite{Ramos-Fernandez-SB-17}, \cite{Castillo-R-RF-SB-19}, \cite{Ramos-Fernandez-RS-SB-19}. It is only in the recent paper \cite{Castillo-LA-RF21} that the
essential norm
\begin{equation}\label{eq-ess-norm}
\norm{M_u}_\e :=\inf\set{\norm{M_u+K}}{K\in\cK(L_p(\mu)},
\end{equation}
where $\cK(L_p(\mu))$ denotes the space of compact operators on $L_p(\mu)$, has been determined, for $1<p<\infty$. (The essential norm $\norm{M_u}_\e$ is the quotient norm in the Calkin algebra.) In order to describe this result we recall that the measure space can be decomposed as a disjoint union $\Omega=\Omega_d\cup\Omega_a$, where $\Omega_d,\Omega_a\in\cA$, the restriction $\mu_d$ of $\mu$ to $\Omega_d$ is a diffuse measure, and the restriction $\mu_a$ of $\mu$ to $\Omega_a$ is (purely) atomic. The property of being diffuse means that for every measurable subset $A$ of $\Omega_d$ with $\mu_d(A)>0$ there exists a measurable subset $A'$ of $A$ such that $0<\mu_d(A')<\mu_d(A)$. And the atomic part $\Omega_a$ is the union of a disjoint sequence $(B_n)_{n\in\N}$ of measurable sets, where each $B_n$ is an atom, which means that any measurable subset $B'$ of $B_n$ has measure $\mu_a(B')\in\{0,\mu_a(B_n)\}$. With this notation, the essential norm of $M_u$ is given by
\begin{equation}\label{eq-main}  
 \norm{M_u}_\e=\max\{\norm{u{\restrict}_{\Omega_d}}_\infty,\limsup_{n\to\infty}|u(B_n)|\}.
\end{equation}
(By $u(B_n)$ we denote the a.e.-value of $u$ on $B_n$; if $\mu_a(B_n)=0$ we choose $u(B_n):=0$.) The proof of \eqref{eq-main} given in \cite[Theorem~4.1]{Castillo-LA-RF21} does not carry over to the case~$p=1$. 

In Section~\ref{sec-ess-norm} we show that \eqref{eq-main} also holds for $p=1$. 
In Sections~\ref{sec-Omega-d-rev} and~\ref{sec-supplement} we provide a second -- quite different -- proof, which works simultaneously for all $p\in[1,\infty)$.

\section{The essential norm of $M_u$}
\label{sec-ess-norm}

Let $(\Omega,\cA,\mu)$ be a $\sigma$-finite measure space, and let $\Omega=\Omega_d\cup\Omega_a$ and $\Omega_a=\bigcup_{n\in\N}B_n$ be as described above.

\begin{theorem}\label{thm-ess-norm}
Let $u\in L_\infty(\mu)$, and let $M_u$ be the multiplication operator associated with $u$ on $L_1(\mu)$. Then $\norm{M_u}_\e$ is given by \eqref{eq-main}.
\end{theorem}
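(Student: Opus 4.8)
The plan is to establish the two inequalities $\norm{M_u}_\e\le c$ and $\norm{M_u}_\e\ge c$ separately, where $c:=\max\{c_d,c_a\}$ with $c_d:=\norm{u{\restrict}_{\Omega_d}}_\infty$ and $c_a:=\limsup_{n\to\infty}|u(B_n)|$.

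For the upper bound I would exploit that only finitely many atoms are ``large''. Since $\limsup_n|u(B_n)|=c_a\le c$, for each $\eps>0$ there is $N$ with $|u(B_n)|\le c+\eps$ for all $n\ge N$. Let $P$ be multiplication by the indicator of $B_1\cup\dots\cup B_{N-1}$; this is a finite-rank projection, so $K:=-M_uP$ is compact. Then $M_u+K=M_{\tilde u}$, where $\tilde u$ agrees with $u$ off the first $N-1$ atoms and vanishes on them. Since $|u|\le c_d\le c$ a.e.\ on $\Omega_d$ and $|u(B_n)|\le c+\eps$ for $n\ge N$, we get $\norm{\tilde u}_\infty\le c+\eps$, hence $\norm{M_u}_\e\le\norm{M_{\tilde u}}=\norm{\tilde u}_\infty\le c+\eps$; letting $\eps\to0$ gives $\norm{M_u}_\e\le c$. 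This step is elementary and works verbatim for every $p\in[1,\infty)$.

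For the lower bound I must show $\norm{M_u+K}\ge c$ for every compact $K$, and here $p=1$ genuinely departs from $1<p<\infty$. The diffuse contribution is still handled by a weakly null test sequence: choosing a finite-measure $A\sse\Omega_d$ with $|u|\ge c_d-\eps$ on $A$ and a Rademacher-type system $(r_k)$ of $\pm1$-valued functions on $A$ (available because $\mu_d{\restrict}_A$ is diffuse), the normalized functions $f_k:=r_k/\mu(A)$ satisfy $\norm{f_k}_1=1$, $f_k\to0$ in $\sigma(L_1,L_\infty)$, and $\norm{M_uf_k}_1=\mu(A)^{-1}\int_A|u|\,d\mu\ge c_d-\eps$. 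Since a compact operator carries weakly null sequences to norm-null sequences, $\liminf_k\norm{(M_u+K)f_k}_1\ge c_d-\eps$, whence $\norm{M_u+K}\ge c_d$.

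The atomic contribution is the crux and the main obstacle: the normalized atoms $e_n:=\indic_{B_n}/\mu(B_n)$ form an isometric $\ell_1$-basis and, by the Schur property, do \emph{not} converge weakly to $0$, so the reflexive-space argument collapses. Instead I would use the Fr\'echet characterization of relative compactness in $\ell_1$: the set $\{Ke_n:n\in\N\}$ is relatively compact in $L_1(\mu)$, so its projection onto the atomic part $L_1(\Omega_a)\cong\ell_1$ has uniformly small tails; thus for $\eps>0$ there is $N$ with $\norm{\indic_{\bigcup_{k\ge N}B_k}\,Ke_n}_1<\eps$ for all $n$. For $m\ge N$, multiplying $(M_u+K)e_m=u(B_m)e_m+Ke_m$ by $\indic_{B_m}$ and using that $e_m$ is supported in $B_m\sse\bigcup_{k\ge N}B_k$ gives
\[
 \norm{(M_u+K)e_m}_1\ge\norm{\indic_{B_m}(M_u+K)e_m}_1\ge|u(B_m)|-\norm{\indic_{B_m}Ke_m}_1\ge|u(B_m)|-\eps .
\]
Taking the supremum over $m\ge N$ yields $\norm{M_u+K}\ge c_a-\eps$, hence $\ge c_a$. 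Combining the two lower bounds gives $\norm{M_u}_\e\ge\max\{c_d,c_a\}=c$, completing the proof. The only genuinely delicate points are the construction of the Rademacher system on the diffuse part (via Sierpi\'nski's intermediate-value property for the nonatomic $\mu_d$) and the uniform tail estimate for the columns $Ke_n$; both are where the argument must be made precise, and the latter is the essential replacement for the weak-nullity argument that is unavailable when $p=1$.
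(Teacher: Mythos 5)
Your proof is correct, but both halves of the lower bound are obtained by routes genuinely different from the paper's. The paper first reduces to compact operators leaving $L_1(\Omega_d)$ and $L_1(\Omega_a)$ invariant via the averaging identity $P_dSP_d+P_aSP_a=\tfrac12\bigl((P_d+P_a)S(P_d+P_a)+(P_d-P_a)S(P_d-P_a)\bigr)$; your test-function arguments make this reduction unnecessary, since your test vectors live in one part and you only restrict the output. On the diffuse part the paper takes normalized indicators $f_n=\indic_{A_n}/\mu(A_n)$ of a nested sequence with $\mu(A_n)\to0$, extracts a norm-convergent subsequence of $(Kf_n)$, and exploits the pointwise inequality $|f_n-f_m|\ge f_n$ to keep $\norm{f_n-f_m}_1\ge1$; you instead use a Rademacher system, which is weakly null, together with the (space-independent) fact that a compact operator maps weakly null sequences to norm-null ones. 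Your variant has the advantage of working verbatim for all $p\in[1,\infty)$ after renormalizing $f_k:=r_k/\mu(A)^{1/p}$, so it already achieves by elementary means the uniformity in $p$ that the paper obtains in Sections 3--4 via band projections onto the centre. On the atomic part the paper extracts from $K$ a compact diagonal operator $D_K$, proves $\norm{M_u+K}\ge\norm{M_u+D_K}$, and then computes the infimum over compact multiplication operators; your equi-small-tails (Fr\'echet) criterion for relative compactness in $\ell_1$ gives the lower bound directly on the vectors $e_m$ and is indeed the correct replacement for the weak-nullity argument that Schur's property forbids. Two small points to make explicit in a final write-up: discard the atoms of measure zero (if only finitely many atoms have positive measure, then $c_a=0$ and that half is vacuous), and justify $r_k\to0$ in $\sigma(L_1,L_\infty)$ by orthogonality of the Rademacher system in $L_2(A,\mu)$ together with $L_\infty(A)\sse L_2(A)$, which uses $\mu(A)<\infty$.
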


\begin{proof}
(i) For the inequality `$\le$' in \eqref{eq-main} we refer to \cite[first part of the proof of Theorem~4.1]{Castillo-LA-RF21}.

(ii) For the proof of `$\ge$' we first note that in the infimum of the formula \eqref{eq-ess-norm} (where in the present step we treat the general case $p\in[1,\infty)$)
one does not need all compact operators, but it is sufficient to consider operators leaving $L_p(\Omega_d)$ and $L_p(\Omega_a)$ invariant. Indeed, let $P_d$ and $P_a$ denote the canonical projections from $L_p(\mu)$
onto $L_p(\Omega_d,\mu_d)$ and $L_p(\Omega_a,\mu_a)$, respectively. Then for any bounded operator $S$ on $L_p(\mu)$ one has $\norm{(P_d-P_a)S(P_d-P_a)}\le\norm S$, and because of
\[
 P_dSP_d+P_aSP_a =\tfrac12\bigl((P_d+P_a)S(P_d+P_a) + (P_d-P_a)S(P_d-P_a)\bigr)
\]
one obtains $\norm{P_dSP_d+P_aSP_a} \le\norm S$. In view of $P_dM_uP_d+P_aM_uP_a=M_u$, this yields
\[
 \norm{M_u+P_dKP_d+P_aKP_a} \le \norm{M_u+K}
\]
for all compact operators, and $P_dKP_d+P_aKP_a$ is a compact operator leaving $L_p(\Omega_d)$ and $L_p(\Omega_a)$ invariant. As a consequence one also concludes that it is sufficient to prove the inequality `$\ge$' separately for diffuse and atomic measure spaces. For the remainder of the proof we now return to the case $p=1$.

(iii) In this part of the proof we show `$\ge$' for the case that $\Omega=\Omega_d$, i.e.~that $\mu$ is a diffuse measure. The case $u=0$ being trivial, assume that $\|u\|_\infty>0$ and let $0<\eps<\norm u_\infty$.
Then there exists a descending sequence $(A_n)_{n\in\N}$ in $\cA$ such that $0<\mu(A_n)\to0$ as $n\to\infty$ and $|u\restrict_{A_n}|\ge\|u\|_\infty-\eps$ for all $n\in\N$; without restriction $\mu(A_1)<\infty$. For $n\in\N$ put 
\[
 f_n:=\frac1{\mu(A_n)}\indic_{A_n},
\]
where $\indic_{A_n}$ denotes the indicator function of the set $A_n$. Let $K\in \cK(L_1(\mu))$. Because $(f_n)_{n\in\N}$ is a bounded sequence, the compactness of $K$ implies that there exists a subsequence $(f_{n_j})_{j\in\N}$ such that the sequence $(Kf_{n_j})$ is convergent; 
by passing to a subsequence, we can assume that $(Kf_n)$ is already convergent. Then there exists $n\in\N$ such that $\norm{Kf_n-Kf_m} \le \eps$ for all $m\ge n$. Choose $m\ge n$ large enough to obtain additionally $\frac1{\mu(A_m)}\ge 2\frac1{\mu(A_n)}$. Then one has
\[
 f_n-f_m=\tfrac1{\mu(A_n)}\indic_{A_n}-\tfrac1{\mu(A_m)}\indic_{A_m} = \tfrac1{\mu(A_n)}\indic_{A_n\setminus A_m}-\bigl(\tfrac1{\mu(A_m)}-\tfrac1{\mu(A_n)}\bigr)\indic_{A_m},
\]
$\bigl|f_n-f_m\bigr|\ge f_n$, $\norm{f_n-f_m}_1\ge \norm{f_n}_1=1$; hence
\begin{align*}
 \norm{(M_u+K)(f_n-f_m)}_1
 &\ge\norm{M_u(f_n-f_m)}_1-\norm{K(f_n-f_m)}_1\\
 &\ge(\norm u_\infty-\eps)\norm{f_n-f_m}_1 - \eps\\ 
 &\ge (\norm u_\infty - 2\eps)\norm{f_n-f_m}_1,
\end{align*}
$\norm{M_u+K}\ge\norm u_\infty - 2\eps$. As this holds for all $\eps\in(0,\norm u_\infty)$, we obtain $\norm{M_u+K}\ge\norm u_\infty$.

(iv) It remains to show that `$\ge$' holds in the case that $\Omega=\Omega_a$, i.e.~that $\mu$ is an atomic measure. If $\mu(B_n)\ne 0$ only for finitely many $n\in\N$, then $\limsup_{n\to\infty}|u(B_n)|=0$, and the assertion is trivial. Assume that this is not the case, without restriction $\mu(B_n)\ne0$ for all $n\in\N$. For $n\in\N$ let $P_n$ be the canonical projection from 
$L_1(\mu)$ onto $L_1(B_n)$, i.e.~$P_nf := \indic_{B_n}f$ ($f\in L_1(\mu)$),
and put $Q_n:=I-\sum_{j=1}^n P_j$. Iterating the procedure applied in step (ii) above one concludes that for all bounded operators $S$ on $L_1(\mu)$ and all $n\in\N$ one obtains $\norm{\sum_{j=1}^nP_jSP_j+Q_nSQ_n}\le\nobreak\norm S$. 

Given a compact operator $K\in\cK(L_1(\mu))$ we note that $\norm{Q_nK}\to0$ as $n\to\infty$. This holds because for any $g\in L_1(\mu)$ one has $\norm{Q_ng}\to 0$, and from the  relative compactness of $K(B_{L_1(\mu)}[0,1])$ (where $B_{L_1(\mu)}[0,1]$ denotes the closed unit ball of $L_1(\mu)$) together with the equicontinuity of the sequence $(Q_n)$ one concludes that
\[
\norm{Q_nK}= \sup_{\norm f\le1}\norm{Q_nKf}= \sup_{g\in K(B_{L_1(\mu)}[0,1])}\norm{Q_ng}\to 0\qquad (n\to\infty).
\]
In particular, we conclude that $\norm{P_nKP_n}\le\norm{(Q_{n-1}-Q_n)K}\to0$ ($n\to\infty$). Note that, for $n\in\N$, there exists $d_n\in\K$ such that $\norm{P_nKP_n}=|d_n|$ and $P_nKP_nf=d_n P_n f$ for all $f\in L_1(\mu)$. Hence the multiplication operator $D_K$, given by $L_1(\mu)\ni f\mapsto \sum_{n\in\N}d_nP_nf\in L_1(\mu)$, is a compact operator.

Now we estimate
\begin{align}\label{eq-comp-inequ-ell1}
\begin{split}
 \norm{M_u+K} &\ge\norm{\sum_{j=1}^nP_j(M_u+K)P_j + Q_n(M_u+K)Q_n}\\
 &=\norm{M_u+D_K+Q_n(K-D_K)Q_n}\\
 &\ge\norm{M_u+D_K}-\norm{Q_n(K-D_K)Q_n}.
\end{split}
\end{align}
From the argument given above we obtain $\norm{Q_n(K-D_K)Q_n}\le \norm{Q_n(K-D_K)} \to\nobreak0$ ($n\to\infty$), and from \eqref{eq-comp-inequ-ell1} we conclude that $\norm{M_u+K}\ge\norm{M_u+D_K}$. This shows that 
\begin{align*}
 \norm{M_u}_\e&=\inf\set{\norm{M_u+D}}{ D\text{ compact multiplication operator}}\\
 &= \inf\set{\sup_{n\in\N}|u(n)+d_n|}{(d_n)_{n\in\N}\text{ null sequence}}\\
 &=\limsup_{n\to\infty}|u(n)|. \qedhere
\end{align*}
\end{proof}

\begin{remark}
Step (iv) of our proof applies also to $p\in(1,\infty)$ and is an alternative to the last part of \cite[proof of Theorem 4.1]{Castillo-LA-RF21}. The idea of our proof is that $\norm{M_u+K}$ can be estimated from below by $\norm{M_u+D}$ for a suitable compact multiplication operator $D$. 
\end{remark}

\section{The case $\Omega=\Omega_d$, revisited}
\label{sec-Omega-d-rev}

Let $(\Omega,\cA,\mu)$ be a diffuse $\sigma$-finite measure space, $p\in[1,\infty)$, and let $u\in L_\infty(\mu)$. In this section we will present a proof of the equality
\begin{equation}\label{eq-ess-norm-d}
 \norm{M_u}_\e =\norm u_\infty
\end{equation}
(i.e.~\eqref{eq-ess-norm} for the present special case), which might throw a new light on this property.

We recall that an operator $S\in\cL(\Lpm)$ (the space of all bounded linear operators) is \textbf{positive}, $S\in\cL(\Lpm)_+$, if $Sf\ge0$ for all $f\in\Lpm_+$. Then $\cL^\rr(\Lpm)$, defined as the linear hull of $\cL(\Lpm)_+$, is the space of \textbf{regular operators}. It is a Banach lattice under the lattice operations
\begin{align*}
 (S\vee T)f&:=\sup\set{Sg+Th}{g,h\ge0,\ g+h=f},\\
 (S\land T)f&:=\inf\set{Sg+Th}{g,h\ge0,\ g+h=f} \qquad (f\in\Lpm_+)
 \end{align*}
 (valid for real operators $S,T\in \cL^\rr(\Lpm)$, the absolute value
\begin{equation*} 
 |S|f := \sup\set{|Sg|}{|g|\le f}\qquad (f\in\Lpm_+),
\end{equation*}
and with the \textbf{regular norm} $\norm S_\rr:=\norm{|S|}$. We refer to \cite[Chap.\,4]{Schaefer1974}, \cite[Sec.\,1.3]{Meyer-Nieberg1991} for more information.

As a preparation to the proof of \eqref{eq-ess-norm-d} we need the following property, where $q\in(1,\infty]$ denotes the exponent conjugate to $p$, $\frac1p+\frac1q=1$.

\begin{lemma}\label{lemma-decomp}
Let $\eta\in L_q(\mu)$ ($=L_p(\mu)'$), $g\in\Lpm$, $\eta, g\ge0$, $K\in\cL(\Lpm)$ defined by
\[
 Kf:=\Bigl(\int\eta f\di\mu\Bigr)\, g\qquad(f\in\Lpm).
\]
(Note that $K\in\cL(\Lpm)_+\sse\cL^\rr(\Lpm)$.) Let $u\in L_\infty(\mu)_+$ be such that $M_u\le K$.
Then $u=0$.
\end{lemma}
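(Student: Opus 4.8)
The plan is to argue by contradiction: I assume $u\ne0$ and exploit the fact that a diffuse measure allows one to split a set into arbitrarily many pieces of positive measure, which turns out to be incompatible with the domination $M_u\le K$ by the rank-one operator $K$. Since $u\in L_\infty(\mu)_+$ with $u\ne0$, there are $\delta>0$ and a measurable set $E$ with $0<\mu(E)<\infty$ (here I use $\sigma$-finiteness to cut down to finite measure) such that $u\ge\delta$ a.e.\ on $E$.

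Next I would unwind the operator inequality. Since $K-M_u\ge0$ in $\cL^\rr(\Lpm)$, one has $Kf\ge M_uf$ pointwise a.e.\ for every $f\in\Lpm_+$; testing with $f=\indic_A$ for a measurable $A\sse E$ of finite positive measure (so that $\indic_A\in\Lpm$) gives $\bigl(\int_A\eta\di\mu\bigr)\,g\ge u\ge\delta$ a.e.\ on $A$.

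Now comes the key step. For each $n\in\N$ I partition $E$ into $n$ disjoint measurable pieces $A_1,\dots,A_n$, each of positive measure; this is possible precisely because $\mu$ is diffuse. Writing $c_i:=\int_{A_i}\eta\di\mu$, the previous inequality reads $c_i\,g\ge\delta$ a.e.\ on $A_i$. If some $c_i=0$, I already have the contradiction $0\ge\delta$ on a set of positive measure; otherwise $g\ge\delta/c_i$ a.e.\ on $A_i$, whence $\int_{A_i}\eta g\di\mu\ge\delta$ for each $i$, and summing gives $\int_E\eta g\di\mu\ge n\delta$. On the other hand, H\"older's inequality yields $\int_E\eta g\di\mu\le\norm\eta_q\norm g_p<\infty$, independently of $n$. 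Letting $n\to\infty$ produces a contradiction, so $u=0$.

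The delicate points are routine: interpreting the lattice inequality $M_u\le K$ as a pointwise a.e.\ inequality after testing against indicators, and arranging $E$ to have finite measure so that $\indic_A\in\Lpm$ and $\int_E\eta\di\mu<\infty$. The conceptual heart, and the step I would watch most carefully, is the subdivision afforded by diffuseness: this is exactly what forces $\int_E\eta g\di\mu$ to be simultaneously $\ge n\delta$ and bounded by $\norm\eta_q\norm g_p$, and it would fail in the presence of an atom. I note that the argument is uniform in $p\in[1,\infty)$.
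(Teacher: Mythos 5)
Your proof is correct, but it takes a genuinely different route from the paper's. The paper also argues by contradiction starting from a set where $u\ge\eps$, but it then intersects with a level set $[g\le c]$ of positive measure to bound $g$ from above, and uses diffuseness to shrink the resulting set $B$ until the scalar $\int_B\eta\di\mu$ falls below $\eps/c$; this yields the direct pointwise contradiction $K\indic_B\le c\int_B\eta\di\mu<\eps\le M_u\indic_B$ on $B$. You instead use diffuseness to split a set $E$ with $u\ge\delta$ into $n$ pieces of positive measure, pair the resulting lower bound $g\ge\delta/c_i$ on $A_i$ against $\eta$ to get $\int_{A_i}\eta g\di\mu\ge\delta$, and let $n\to\infty$ against the uniform H\"older bound $\int_E\eta g\di\mu\le\norm\eta_q\norm g_p$. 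Your version avoids having to control $g$ from above (no choice of $c$), at the price of treating the degenerate case $c_i=0$ and invoking H\"older; the paper's version is slightly more local and needs no summation. Both arguments correctly isolate diffuseness as the essential hypothesis --- each would fail on an atom --- and both are uniform in $p\in[1,\infty)$, so the choice between them is a matter of taste.
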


\begin{proof}
 Assume on the contrary that $u\ne0$. Then there exists $\eps>0$ such that $\mu([u\ge\eps])>0$ (with the notation $[u\ge\eps]:=\set{x\in\Omega}{u(x)\ge\eps}$). Further there exists $c>0$ such that
 $\mu([u\ge\eps]\cap[g\le c])>0$. Let $B\in\cA$, $B\sse[u\ge\eps]\cap[g\le c]$ with $0<\mu(B)<\infty$. Then $M_u\indic_B\ge\eps$ and $K\indic_B=\int_B\eta\di\mu\, g\le c\int_B\eta\di\mu$ on~$B$. There exists $B$ as above and such that $\int_B\eta\di\mu<\eps/ c$, and this leads to the contradiction $K\indic_B\le c\int\eta\di\mu<\eps\le M_u\indic_B$ on $B$. 
\end{proof}

The \textbf{centre} $\cZ(\Lpm)$ of $\cL(\Lpm)$ is the linear hull of the order interval
\[
 [-I,I] = \set{S\in \cL(\Lpm)}{-f\le Sf\le f\ (f\in\Lpm_+)}.
\]
Then $\cZ(\Lpm)\sse\cL^\rr(\Lpm)$ consists of the bounded multiplication operators and is isometrically isomorphic to $L_\infty(\mu)$; see \cite[C-I, Section~9]{Nagel1986}.

The centre $\cZ(\Lpm)$ is a projection band in the Banach lattice $\cL^\rr(\Lpm)$, i.e.~for all  $S\in\cL^\rr(\Lpm)$ there exists a (unique) decomposition $S=S_1+S_2$, where $S_1\in\cZ(\Lpm)$ and
\[
 S_2\in\cZ(\Lpm)^\rd = \set{T\in\cL^\rr(\Lpm)}{|T|\land R=0\ (R\in\cZ(\Lpm)_+)}; 
\]
see \cite[Chap.~II, Theorem~2.10]{Schaefer1974}
Let $\cP\colon\cL^\rr(\Lpm)\to\cZ(\Lpm)$, $S\mapsto S_1$ denote the associated band projection. What we have shown in Lemma~\ref{lemma-decomp} is that $\cP K=0$ for the special (positive) rank-one operators $K$. 
(Indeed, the lemma shows that $K$ belongs to $\cZ(\Lpm)^\rd$.) It is easy to see that any finite-rank operator $K\in\cL(\Lpm)$ can be written as a linear combination of rank-one operators as in Lemma~\ref{lemma-decomp}; hence $\cP K=0$ for all finite-rank operators.

\begin{theorem}\label{thm-main-rev}
Let $u\in L_\infty(\mu)$. Then 
\begin{equation}\label{eq-op-inequ-rev}
 \norm{M_u+K}\ge\norm{M_u} = \norm u_\infty\qquad(K\in\cK(\Lpm),
\end{equation}
and  \eqref{eq-ess-norm-d} holds.
\end{theorem}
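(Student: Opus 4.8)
The plan is to establish the inequality \eqref{eq-op-inequ-rev} by exploiting the band projection $\cP\colon\cL^\rr(\Lpm)\to\cZ(\Lpm)$ together with the key fact, already recorded after Lemma~\ref{lemma-decomp}, that $\cP K=0$ for every finite-rank operator $K$. The central observation is that the band projection onto the centre of $\cL^\rr(\Lpm)$ is a contraction for the regular norm, and moreover that on the centre $\cZ(\Lpm)\cong L_\infty(\mu)$ the regular norm coincides with the ordinary operator norm. Since $M_u\in\cZ(\Lpm)$ we have $\cP M_u=M_u$; hence for a finite-rank operator $K$ I would compute $\cP(M_u+K)=M_u+\cP K=M_u$, and therefore
\[
 \norm{M_u}=\norm{\cP(M_u+K)}_\rr\le\norm{M_u+K}_\rr.
\]
The immediate obstacle is that this chain uses the regular norm $\norm{\,\cdot\,}_\rr$ on the right-hand side, whereas the essential norm is defined via the ordinary operator norm and an arbitrary compact $K$, which need not be regular at all.

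To bridge this gap I would proceed in two stages. First, I restrict attention to finite-rank $K$ and argue that it suffices, since finite-rank operators are dense in $\cK(\Lpm)$ in operator norm and the essential norm is unchanged if the infimum in \eqref{eq-ess-norm} runs over finite-rank operators only. Second -- and this is where the main work lies -- I must replace $\norm{M_u+K}_\rr$ by the ordinary norm $\norm{M_u+K}$. Here I would invoke the structural fact that on an $L_p$-space the band projection $\cP$ onto the centre is not merely a regular contraction but satisfies $\norm{\cP S}\le\norm S$ for the \emph{operator} norm as well, for every regular $S$; equivalently, that the map $S\mapsto\cP S$ extends to a norm-one projection from $\cL(\Lpm)$ (or at least from the regular operators, equipped with the operator norm) onto $\cZ(\Lpm)$. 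For $p=1$ and $p=\infty$ every bounded operator is regular, so $\norm{\,\cdot\,}_\rr=\norm{\,\cdot\,}$ and the difficulty evaporates; the genuine content is the intermediate range $1<p<\infty$.

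The cleanest route I foresee for the operator-norm contractivity of $\cP$ is a symmetrisation/averaging argument analogous to step~(ii) of the proof of Theorem~\ref{thm-ess-norm}: one realises $\cP S$ as an average (or limit of averages) of conjugates $M_v S M_v$ over unimodular multipliers $v$, each of which has operator norm at most $\norm S$, so that the average inherits the bound $\norm{\cP S}\le\norm S$. Concretely, for the atomic case this is exactly the diagonal-compression estimate $\norm{\sum_j P_j S P_j}\le\norm S$ already used; for the diffuse case one integrates $M_v S M_{\bar v}$ against a suitable family of unimodular $v$ and passes to the band projection by a martingale-type limit. Once $\norm{\cP S}\le\norm S$ is in hand, combining it with $\cP(M_u+K)=M_u$ and $\norm{\cP M_u}=\norm{M_u}=\norm u_\infty$ yields $\norm{M_u+K}\ge\norm{M_u}$ for finite-rank $K$, hence for all compact $K$ by density, which is precisely \eqref{eq-op-inequ-rev}; passing to the infimum gives \eqref{eq-ess-norm-d}. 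I expect the averaging/martingale identification of $\cP$ with a contractive conditional-expectation-type projection to be the main obstacle, since it is what actually forces the operator-norm (rather than regular-norm) estimate in the reflexive range.
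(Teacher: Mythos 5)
Your proposal follows essentially the same route as the paper: the band projection $\cP$ onto the centre $\cZ(\Lpm)$, the fact that $\cP K=0$ for finite-rank $K$ (the consequence drawn from Lemma~\ref{lemma-decomp}), the approximation property of $\Lpm$ to pass from finite-rank to general compact operators, and the contractivity of $\cP$ for the operator norm, which together give $\norm{M_u}=\norm{\cP(M_u+K)}\le\norm{M_u+K}$. The single step you flag as the main obstacle --- that $\cP$ is contractive for the \emph{operator} norm and not merely for the regular norm --- is exactly what the paper imports as a known result from \cite[Theorem~1.4]{Voigt1988}, so your averaging/martingale sketch is a plausible alternative proof of that one ingredient rather than a different overall argument.
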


\begin{proof}
Clearly, it suffices to show \eqref{eq-op-inequ-rev}. There are two ingredients of the proof:

(i) By the very definition, $\cP$ is contractive with respect to the regular norm (because band projections are contractive). However, it is shown in \cite[Theorem~1.4]{Voigt1988} that $\cP$ is also contractive with respect to the operator norm. This implies that $\cP$ can be extended by continuity to the closure of $\cL^\rr(\Lpm)$ in $\cL(\Lpm)$. In particular, for the extension one obtains $\cP K=0$ for all $K$ in the operator norm closure of the finite rank operators.

(ii) The space $\Lpm$ enjoys the \emph{approximation property}, i.e.~every compact operator on $\Lpm$ can be approximated in operator norm by finite rank operators. (We refer to \cite[Sections~3.4 and~4.1]{Megginson1998} for the approximation property.) This implies that $\cP K=0$ for all compact operators on $\Lpm$.

Putting together these two properties we obtain
\[
 \norm{M_u} =\norm{\cP(M_u+K)}\le \norm{M_u+K}
\]
for all $K\in\cK(\Lpm)$.
\end{proof}

\section{Supplement on the case $\Omega=\Omega_a$}
\label{sec-supplement}

We add that the case $\Omega=\Omega_a$ can be treated analogously to the case $\Omega=\Omega_d$ described in Section~\ref{sec-Omega-d-rev}. Then again the centre of $\cL(\Lpm)$ consists of the bounded multiplication operators. Lemma~\ref{lemma-decomp} is replaced by the property that multiplication operators are disjoint to positive rank-one operators~$K$ of the type
\begin{equation}\label{eq-rk-1-op}
 Kf = \int_{\Omega\setminus B_j}f\eta\di \mu\indic_{B_j}=\Bigl(\sum_{k\ne j}f(B_k)\eta(B_k)\mu(B_k)\Bigr)\indic_{B_j}\qquad (f\in\Lpm),
\end{equation}
where $j\in\N$ and $\eta\in L_q(\mu)_+$. Indeed, if $u\in L_\infty(\mu)_+$ is such that $M_u\le K$, then clearly $u(B_k)=0$ for all $k\ne j$. But 
$u(B_j)\indic_{B_j}=M_u\indic_{B_j}\le K\indic_{B_j}=0$; hence also $u(B_j)=0$. (Recall that $u(B_k)=0$ if $\mu(B_k)=0$, by our convention in the Introduction.)

The consequence is that, for a compact operator~$K$, its projection $\cP K$ onto the centre is the compact operator~$D_K$ (described in part (iv) of the proof of Theorem~\ref{thm-ess-norm}). This holds because for a compact operator~$K$ and $n\in\N$, the finite rank operator $(I-Q_n)K$ (with the notation of the proof of Theorem~\ref{thm-ess-norm}, part (iv)) can be decomposed as the multiplication operator $(I-Q_n)D_K$ and a linear combination of rank-one operators of the type \eqref{eq-rk-1-op}. As $(I-Q_n)K\to K$ ($n\to\infty$) in $\cL(\Lpm)$ and the band projection $\cP$ onto the centre is contractive with respect to the operator norm, one concludes that
$\cP K = \lim_{n\to\infty}\cP(I-Q_n)K = \lim_{n\to\infty}(I-Q_n)D_K=D_K$. 

Hence instead of \eqref{eq-comp-inequ-ell1} one obtains $\norm{M_u+K}\ge\norm{\cP(M_u+K)}=\norm{M_u+D_K}$, and the proof can be finished as in Section~\ref{sec-ess-norm}.

{\frenchspacing

}
\bigskip

\noindent
J\"urgen Voigt\\
Technische Universit\"at Dresden\\
Fakult\"at Mathematik\\
01062 Dresden, Germany\\
{\tt 
juer\rlap{\textcolor{white}{xxxxx}}gen.vo\rlap{\textcolor{white}{yyyyyyyyyy}}%
igt@tu-dr\rlap{\textcolor{white}{%
zzzzzzzzz}}esden.de}


\begin{thebibliography}{99}

\bibitem{Bala-G-B-13}
P.\;Bala, A.\;Gupta, N.\;Bhatia:
\emph{Multiplication operators on Orlicz-Lorentz sequence spaces}. Int. J. Math. Anal. (Ruse) \textbf{7}, no.\,30, 1461--1469 (2013).

\bibitem{Castillo-LA-RF21}
R.\,E.\;Castillo, Y.\,A.\;Lemus-Abril and J.\,C.\;Ramos-Fern\'andes: \emph{Essential norm estimates for multiplication operators on $L_p(\mu)$ spaces}. Afr. Mat.~\textbf{32}, 
no. 7-8, 1595--1603 (2021).

\bibitem{Castillo-R-RF-SB-19}
R.\,E.\;Castillo, H.\;Rafeiro, J.\,C.\;Ramos-Fern\'andes, M.\,Salas-Brown:
\emph{Multiplication operator on K\"othe spaces: measure of non-compactness and closed range}. Bull. Malays. Math. Sci. Soc. \textbf{42}, no.\,4, 1523--1534 (2019).

\bibitem{Castillo-RF-SB-16}
R.\,E.\;Castillo, J.\,C.\;Ramos-Fern\'andes, M.\,Salas-Brown:
\emph{The essential norm of multiplication operators on Lorentz sequence spaces}. Real Anal. Exchange \textbf{41}, no.\,1, 245--251 (2015/2016).

\bibitem{Hudzik-K-K-06}
H.\;Hudzik, R.\;Kumar, R.\;Kumar:
\emph{Matrix multiplication operators on Banach function spaces}. Proc. Indian Acad. Sci. (Math. Sci.) \textbf{116}, no.\,1, 71--81 (2006).

\bibitem{Komal-P-R-16}
B.\,S.\;Komal, S.\;Pandoh, K.\;Raj:
\emph{Multiplication operators on Ces\`aro sequence spaces}. Demonstratio Math. \textbf{49}, no.\,4, 430--436 (2016).

\bibitem{Megginson1998}
R.\,E.\;Megginson: An introduction to Banach space theory. Springer-Verlag, New York, 1998.

\bibitem{Meyer-Nieberg1991}
P.\;Meyer-Nieberg: Banach lattices. Springer-Verlag, Berlin, 1991.

\bibitem{Nagel1986}
R.\;Nagel (ed.): One-parameter semigroups of positive operators. Lecture Notes in Mathematics \textbf{1184}, Springer-Verlag, Berlin, 1986.

\bibitem{Raj-S-P-16}
K.\;Raj, C.\;Sharma, S.\;Pandoh:
\emph{Multiplication operators on Ces\`aro-Orlicz sequence spaces}. Fasc. Math. \textbf{57}, 137--145 (2016).

\bibitem{Ramos-Fernandez-RS-SB-19}
J.\,C.\;Ramos-Fern\'andez, M.\;Rivera-Sarmiento, M.\;Salas-Brown:
\emph{On the essential norm of multiplications operators acting on Ces\`aro sequence spaces}. J. Funct. Spaces \textbf{2019}. Art. ID 5069610, 5 pages (2019).

\bibitem{Ramos-Fernandez-SB-17}
J.\,C.\;Ramos-Fern\'andez, M.\;Salas-Brown:
\emph{On multiplication operators acting on K\"othe sequence spaces}. Afr. Mat. \textbf{28}, no.\,3-4, 661--667 (2017).

\bibitem{Schaefer1974}
H.\,H.\;Schaefer: Banach lattices and positive operators. Springer-Verlag, New York, 1974.

\bibitem{Takagi-92}
H.\;Takagi:
\emph{Compact weighted composition operators on $L^p$}. Proc. Amer. Math. Soc. \textbf{116}, no.\,2, 505--511 (1992).

\bibitem{Voigt1988}
J.\;Voigt: \emph{The projection onto the center of operators in a Banach lattice}. Math. Z. \textbf{199}, 115--117 (1988).

\end{thebibliography}
\end{document}